\newtheorem{theo}{Theorem}[section]
\newtheorem{propo}[theo]{Proposition}
\newtheorem{coro}[theo]{Corollary}
\newtheorem{rem}[theo]{Remark}
\newcommand\Surj{\operatorname{\it Surj}}
\newcommand\Inj{\operatorname{\it Inj}}
\newcommand\ob{\operatorname{ob}}
\newcommand\Lan{\operatorname{Lan}}
\newcommand\Alg{\operatorname{\bf Alg}}
\newcommand\Pos{\operatorname{\bf Pos}}
\newcommand\op{\operatorname{op}}
\newcommand\Set{\operatorname{\bf Set}}
\newcommand\Met{\operatorname{\bf Met}}
\newcommand\Str{\operatorname{\bf Str}}
\newcommand\CPO{\operatorname{\bf CPO}}
\newcommand\ca{\mathcal {A}}
\newcommand\cd{\mathcal {D}}
\newcommand\ck{\mathcal {K}}
\newcommand\ct{\mathcal {T}}
\newcommand\cv{\mathcal {V}}
 \newbox\noforkbox \newdimen\forklinewidth
\noforkbox\hbox{\lower 2pt\box1\lower
2pt\box0\relax}
\date{November 4, 2024}
\begin{document}
\title[Discrete Lawvere theories and monads]
{Discrete Lawvere theories and monads}
\author[J. Rosick\'{y}]
{J. Rosick\'{y}}
\thanks{Supported by the Grant Agency of the Czech Republic under the grant 22-02964S} 
 
\address{\newline J. Rosick\'{y}\newline
Department of Mathematics and Statistics,\newline
Masaryk University, Faculty of Sciences,\newline
Kotl\'{a}\v{r}sk\'{a} 2, 611 37 Brno,\newline
 Czech Republic}
\email{rosicky@math.muni.cz}

\begin{abstract}
We show that, under certain assumptions, strongly finitary enriched monads are given by discrete enriched Lawvere theories. On the other hand, monads given by discrete enriched Lawvere theories preserve surjections.
\end{abstract}
\maketitle
\section{Introduction}
Linton \cite{L} proved that Lawvere theories correspond to finitary monads on the category $\Set$ of sets. Much later, Power \cite{P} introduced enriched Lawvere theories and showed that they correspond to enriched finitary monads on every category $\cv$ which is locally finitely presentable as a symmetric monoidal closed category. His enriched
Lawvere theories have finitely presentable objects of $\cv$ as arities. 
In $\Set$, such objects are discrete in the sense that they are finite
coproducts of the monoidal unit $I$. Discrete Lawvere theories from Power \cite{P1} take discrete finite objects as arities. A natural question is to characterize monads induced by discrete Lawvere theories.
The same question can be asked for every regular cardinal $\kappa$ where
discrete $\kappa$-ary Lawvere theories have $\kappa$-small coproducts of $I$ as arities.

Strongly finitary functors were introduced by Kelly and Lack \cite{KL}
as functors which are the left Kan extension of their restriction on finite discrete objects. They showed that strongly finitary functors are closed under composition in every cartesian closed complete and cocomplete category. Similarly, one says that a functor is strongly $\kappa$-ary if it is the left Kan extension of its restriction on $\kappa$-small discrete objects. Following Bourke and Garner \cite[Theorem 43]{BG}, if strongly $\kappa$-ary functors are closed under
composition then strongly $\kappa$-ary monads are precisely monads given by discrete $\kappa$-ary Lawvere theories. Consequently, strongly finitary monads coincide with monads given by discrete finitary Lawvere theories
in every cartesian closed complete and cocomplete category. More generally, Borceux and Day \cite{BD} proved this for every $\pi$-category.
G. Tendas observed that the argument of \cite{KL} still works when finite products are absolute limits in $\cv$. The reason is that the functor $L$ from \cite{KL} automatically preserves finite products.

We will deal with bases $\cv$ where surjections, i.e., morphisms $f$
such that $\cv(I,f)$ is a surjective, form a left part of an enriched
factorization system on $\cv$. A typical example is the category $\Met$
of metric spaces (with distances $\infty$ allowed) and nonexpanding maps.
Our main result is that strongly $\kappa$-ary monads are given by discrete $\kappa$-ary Lawvere theories provided that $\cv$ is locally $\kappa$-generated and locally $\lambda$-presentable for $\lambda\geq\kappa$.
For instance, strongly finitary monads on $\Met$ are given by discrete finitary Lawvere theories. This was proved by Ad\' amek, Dost\' al and Velebil in \cite{ADV1}. But it is an open problem whether, conversely, monads given by discrete finitary Lawvere theories in $\Met$ are strongly finitary. We will only show that monads induced by discrete Lawvere theories preserve surjections, i.e., that discrete Lawvere theories are discrete equational theories in the sense of \cite{R3}. This question was asked in \cite[4.7]{R3}.


{\bf Acknowledgement.} The author is grateful to Giacomo Tendas for valuable discussions and comments.

\section{Preliminaries}
Let $\cv$ be a symmetric monoidal closed category with the unit object $I$
and the underlying category $\cv_0$. Following \cite{K1}, $\cv$ is locally $\lambda$-presentable as a symmetric monoidal closed category 
if $\cv_0$ is locally $\lambda$-presentable, $I$ is $\lambda$-presentable in $\cv_0$ and $X\otimes Y$ is $\lambda$-presentable in $\cv_0$ whenever $X$ and $Y$ are $\lambda$-presentable in $\cv_0$. This ensures that $\lambda$-presentable objects in the enriched sense and in the ordinary sense coincide. We will denote by $\cv_\lambda$ the (representative) small, full subcategory consisting 
of $\lambda$-presentable objects.

The underlying functor $\cv_0(I,-):\cv_0\to\Set$ has a left adjoint $-\cdot I$ sending a set $X$ to the coproduct $X\cdot I$ of $X$ copies of $I$ in $\cv_0$. Objects $X\cdot I$ will be called \textit{discrete}
(see \cite{R3}). Every object $V$ of $\cv$ determines a discrete object $V_0=\cv_0(I,V)\cdot I$ and morphisms $\delta_V:V_0\to V$ given by the counit of the adjunction. Every morphism $f:V\to W$ determines the morphism $f_0=\cv_0(I,f)\cdot I$ between the underlying discrete objects.
We will denote by $\cd_\lambda$ the (representative) full subcategory consisting of discrete $\lambda$-presentable objects. If $\cv_0(I,-)$
preserves $\lambda$-presentable objects, then $|\cv_0(I,X)|<\lambda$
for every $\lambda$-presentable object $X$. Hence $X_0$ is $\lambda$-presentable as well.

A morphism $f:A\to B$ will be called a \textit{surjection} if $\cv_0(I,f)$ is surjective (see \cite{R3}). For instance, every $\delta_V$ is  surjective. Let $\Surj$ denote the class of all surjections in $\cv_0$ and let $\Inj$ be the class of morphisms of $\cv_0$ having the unique right lifting property w.r.t. every surjection.
Morphisms from $\Inj$ will be called \textit{injections}. $(\Surj,\Inj)$
is a factorization system in $\cv_0$ if and only if $\Surj$ is closed under colimits (see \cite[3.3]{R3}). It is a $\cv$-factorization system
if and only if $\Surj$ is moreover closed under tensors (\cite[5.7]{LW}).
Surjections are epimorphisms provided that $I$ is a generator because then
$\cv(I,-)$ is faithful, thus reflects epimorphisms.

Let $\ca$ be a small, full, dense sub-$\cv$-category of $\cv$ with the inclusion $K:\ca\to\cv$. Objects of $\ca$ are called \textit{arities}. Then an $\ca$-\textit{pretheory} is an identity-on-objects $\cv$-functor $J:\ca\to\ct$. A $\ct$-\textit{algebra} is an object $A$ of $\cv$ together with a $\cv$-functor $\hat{A}:\ct^{\op}\to\cv$ whose composition with $J^{\op}$ is $\cv(K-,A)$. Hence every morphism $f:JY\to JX$ induces an $(X,Y)$-ary operation $\tilde{A}(f):A^X\to A^Y$ on $A$. Every $\ca$-pretheory $\ct$ induces a $\cv$-monad $T:\cv\to\cv$ given by its $\cv$-category $\Alg(\ct)$ of algebras. Conversely, a $\cv$-monad $T$ induces an $\ca$-pretheory $J:\ca\to\ct$ where $\ct$ is the full subcategory of $\Alg(T)$ consisting of free agebras on objects from $\ca$ and $J$ is the domain-codomain restriction of the free algebra functor. An $\ca$-pretheory is an $\ca$-\textit{theory} if it is given by its monad. Then $\hat{A}$ is the hom-functor $\Alg(\ct)(-,A)$ restricted on free algebras over $\ca$. All this is explained in \cite{BG}) where $\ca$-theories are characterized ($\ct$-algebras are called \textit{concrete $\ct$-models}).

Under a $\lambda$-\textit{ary $\cv$-theory} we will mean
a $\cv_\lambda$-theory. Following \cite{BG}, $\lambda$-ary $\cv$-theories correspond to $\cv$-monads on $\cv$ preserving
$\lambda$-filtered colimits. They are called $\lambda$-\textit{ary
$\cv$-monads}. Since $\cd_\lambda$ is dense in $\cv$, we can consider 
 $\cd_\lambda$-theories which correspond to $\lambda$-ary discrete Lawvere theories of \cite{P1}. On $\Met$, finitary discrete Lawvere theories correspond to varieties of unconditional quantitative theories of \cite{MPP1,MPP} (see (\cite[4.7]{R}). On the category $\Pos$  of posets, finitary discrete Lawvere theories correspond to varieties of ordered algebras of \cite{Bl} (see (\cite[4.7]{R}).

We will say that a $\cv$-functor $H:\cv\to\cv$ is \textit{strongly $\lambda$-ary} if it is $\Lan_K HK$ where $K:\cd_\lambda\to\cv$ is the inclusion (see \cite{KL} for $\lambda=\aleph_0)$. A $\cv$-monad $T$ is
strongly $\mu$-ary if $T$ is strongly $\mu$-ary as a $\cv$-functor. Let $\tilde{K}:\cv\to [(\cd_\lambda )^{\op},\cv]$ be the induced fully faithful $\cv$-functor.

\section{Strongly $\lambda$-ary monads}
Recall that an object $A$ in a category $\ck$ is $\mu$-generated if its hom-functor $\ck(A,-):\ck\to\Set$ preserves $\mu$-directed colimits of monomorphisms. A cocomplete category $\ck$ is locally $\mu$-generated  
if it has a set $\ca$ of $\mu$-generated objects such that every object
is a $\mu$-directed colimit of its subobjects from $\ca$ (see \cite{AR3}). Like in the locally presentable case, $\cv$ is locally $\mu$-generated as a symmetric monoidal closed category if $\cv_0$ is locally $\mu$-generated, $I$ is $\mu$-generated in $\cv_0$ and $X\otimes Y$ is $\mu$-generated in $\cv_0$ whenever $X$ and $Y$ are $\mu$-generated in $\cv_0$. This ensures that $\mu$-generated objects in the enriched sense and in the ordinary sense coincide. 

\begin{theo}\label{discrete}
Let $\cv$ be locally $\lambda$-presentable and locally $\mu$-generated as a symmetric monoidal closed category for $\mu\leq\lambda$. Assume that $I$ is a generator, $\cv_0(I,-)$ preserves $\lambda$-presentable objects and that $(\Surj,\Inj)$ is a $\cv$-factorization system. Then strongly $\mu$-ary $\cv$-monads on $\cv$ are given by $\mu$-ary discrete  Lawvere theories.
\end{theo}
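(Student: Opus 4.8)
The plan is to reduce the statement to a closure property and then verify that property using the factorization system. By \cite[Theorem 43]{BG} it suffices to prove that strongly $\mu$-ary $\cv$-endofunctors are closed under composition; granting this, strongly $\mu$-ary monads are exactly the monads carried by $\cd_\mu$-theories, which are the $\mu$-ary discrete Lawvere theories. So fix the inclusion $K\colon\cd_\mu\to\cv$ and the nerve $\tilde K\colon\cv\to[(\cd_\mu)^{\op},\cv]$, $\tilde K V=\cv(K-,V)$.

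The first reduction is purely formal. For $F=HK$ one has $\Lan_K F=(-\star F)\circ\tilde K$, where $-\star F\colon[(\cd_\mu)^{\op},\cv]\to\cv$ sends a weight $W$ to $W\star F=\int^{d}Wd\cdot Fd$; being a weighted-colimit functor in the weight variable, $-\star F$ is cocontinuous. Given two strongly $\mu$-ary functors $H=\Lan_K F$ and $G=\Lan_K F'$, the composite $HG$ is strongly $\mu$-ary iff the counit $\Lan_K((HG)K)\to HG$ is invertible, i.e.\ iff $H$ preserves each colimit $GV=\cv(K-,V)\star F'$. Since $H=(-\star F)\circ\tilde K$ and $-\star F$ is cocontinuous, it is enough that $\tilde K$ preserve these colimits; as colimits in $[(\cd_\mu)^{\op},\cv]$ are pointwise, this amounts to each $\cv(Ke,-)$ ($e\in\cd_\mu$) preserving $\cv(K-,V)\star F'$. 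Because $Ke$ is discrete, $\cv(Ke,-)$ is a $\mu$-small power $(-)^{X_e}$, so the whole theorem comes down to one assertion: \emph{for every $V$ the weight $\cv(K-,V)$ is $\mu$-sifted}, i.e.\ $\mu$-small powers commute with the colimits $\cv(K-,V)\star(-)$.

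To prove siftedness I would build every $V$ out of discrete objects along colimits that preserve it. For $V\in\cd_\mu$ the weight $\cv(K-,V)$ is representable and hence trivially $\mu$-sifted, and an arbitrary discrete $Y\cdot I$ is a $\mu$-directed colimit of the $Y'\cdot I$ with $Y'\subseteq Y$ of size ${<}\mu$. Local $\mu$-generation supplies two stability facts modelled on $\Set$: $\mu$-small powers, and the representables $\cv(Kd,-)$ with $Kd$ $\mu$-generated, commute with $\mu$-directed colimits of monomorphisms; consequently $\mu$-sifted weights are closed under such colimits, and all discrete objects give $\mu$-sifted weights. Since in a locally $\mu$-generated category every $V$ is a $\mu$-directed colimit of its $\mu$-generated subobjects, it remains to treat a $\mu$-generated $V$, and for this I would use the comonad $(-\cdot I)\circ\cv_0(I,-)$: its canonical resolution presents $V$ as a reflexive coequalizer (up to the $\mu$-directed colimits already handled) of discrete objects, the augmentation being the surjection $\delta_V$.

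The hard part is exactly this last step, passing $\mu$-siftedness through the surjective presentation of $V$ by discretes --- precisely the point where the base is neither cartesian, as in \cite{KL}, nor has finite products absolute, as in Tendas's variant. Here the $\cv$-factorization system does the work of the missing product-exactness. For the reflexive coequalizer above, the canonical comparison map from the colimit of the powers to the power of the colimit is to be shown invertible by proving it lies in $\Surj\cap\Inj$, hence is an isomorphism: the surjection part follows because $\Surj$ is closed under tensors and under colimits, so powers and copowers of the covering surjections $\delta$ stay surjective, while the injection part uses that $I$ is a generator, so that surjections are epimorphisms and the fibres of the two sides cannot differ. I expect the bookkeeping of this surjection/injection analysis, and the verification that the comonad resolution really is a colimit presentation compatible with the weighted colimits, to be the only genuinely technical portion; everything else is formal manipulation of weighted colimits and the density of $\cd_\mu$.
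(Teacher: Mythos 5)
There is a genuine gap, and it sits exactly where you place the ``hard part.'' Your reduction via \cite[Theorem 43]{BG} to closure of strongly $\mu$-ary endofunctors under composition would prove a biconditional: it would show not only that strongly $\mu$-ary monads are given by $\mu$-ary discrete Lawvere theories, but also the converse. Since $\Met$ satisfies the hypotheses of the theorem (with $\lambda=\aleph_1$, $\mu=\aleph_0$) and the introduction explicitly states that the converse is an \emph{open problem} for $\Met$, your strategy is committed to proving something the theorem's hypotheses are not known to yield. The obstruction is precisely your ``one assertion'': that the weights $\cv(K-,V)$ are $\mu$-sifted, i.e.\ that $\mu$-small powers commute with the canonical colimits presenting $V$ over discrete objects. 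This is the product-exactness that holds when $\cv$ is cartesian closed (\cite{KL}) or when finite products are absolute, and it is exactly what is missing in $\Met$. Your proposed verification does not close it: you argue the comparison map lies in $\Surj\cap\Inj$, but the injection half (``$I$ is a generator, so the fibres of the two sides cannot differ'') conflates injectivity of $\cv_0(I,-)$ on the map with membership in $\Inj$. In $\Met$ the injections are the isometric embeddings; a nonexpanding bijection on points is generally not an isomorphism, and the whole difficulty is whether the \emph{metric} (enriched) structure of the colimit of powers agrees with that of the power of the colimit. Nothing in the proposal addresses this, and the auxiliary claims (that $\mu$-small powers commute with $\mu$-directed colimits of monomorphisms as a consequence of local $\mu$-generation, that the comonad resolution gives a reflexive-coequalizer presentation compatible with the weighted colimits) are likewise asserted rather than proved.

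The paper avoids saturation entirely and proves only the stated direction by a different route: starting from a strongly $\mu$-ary $T$, it takes the associated $\lambda$-ary theory $\ct$ with $\Alg(\ct)\simeq\cv^T$, shows $T$ preserves surjections (this is where the $\cv$-factorization system and closure of $\Surj$ under products and colimits are actually used, via pointwise surjectivity of $\tilde K(f)$ and the dual of \cite[4.5]{LW}), and then proves that the reduct functor $R:\Alg(\ct)\to\Alg(\ct_d)$ to the discretely-presented theory is an isomorphism by checking it is faithful, full and surjective on objects --- the key device being the factorization $F(\delta_X)f_0=fF(\delta_Y)$ from \cite[4.4]{R3}, which lets every $(X,Y)$-ary operation be reconstructed from a discrete one using that $(UA)^{\delta_Y}$ is a monomorphism (here the hypothesis that $I$ is a generator enters). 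Local $\mu$-generation is then used only at the end, to cut the discrete arities down from $\lambda$-presentable to $\mu$-presentable via $\mu$-directed colimits of split monomorphisms. If you want to pursue your approach, you would first have to establish $\mu$-siftedness of $\cv(K-,V)$ under these hypotheses, which would be a substantially stronger (and currently open) result.
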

\begin{proof}
Let $T$ be a strongly $\mu$-ary $\cv$-monad on $\cv$. Since $\cd_\mu$ consists of $\lambda$-presentable objects, $T$ preserves $\lambda$-directed colimits (see \cite[5.29]{K}). Following 
\cite{BG}, $\cv^T$ is equivalent to $\Alg(\ct)$ where $\ct$ is an $\lambda$-ary $\cv$-theory whose $(X,Y)$-ary operations correspond to morphisms $f:FY\to FX$; here $U:\Alg(T)\to\cv$ is the forgetful functor and $F$ is its enriched left adjoint. 

We will show that $T$ preserves surjections. Let $f:X\to Y$
be a surjection. Express $Y$ and $X$ as weighted colimits $\tilde{K}X\ast K$ and $\tilde{K}Y\ast K$ of $\mu$-presentable discrete objects. Following \cite[5.29]{K}, $T$ preserves these weighted colimits, i.e., 
$TX=\tilde{K}X\ast TK$ and $TY=\tilde{K}Y\ast TK$. Hence  
$$
T(f)=\tilde{K}(f)\ast TK.
$$ 
Since,
$$
\tilde{K}(f)_{Z\cdot I}= [Z\cdot I,f]= f^Z
$$
and surjections are closed under products (\cite[3.2(2)]{R3}), 
$\tilde{K}(f):\tilde{K}X\to \tilde{K}Y$ is pointwise surjective. The dual of \cite[4.5]{LW} implies that $T(f)$ is surjective. 

Let $\ct_d$ be the restriction of $\ct$ on $\cd_\lambda$ and
$$
R: \Alg(\ct)\to\Alg(\ct_d)
$$ 
the reduct functor. Clearly, it is a $\cv$-functor and we will prove that it is an isomorphism. At first, we will show that $R$ is an embedding. It is clearly faithful. Following \cite[4.4]{R3}, for every  $f:FY\to FX$, there is $f_0:FY_0\to FX_0$ such that $F(\delta_X)f_0=fF(\delta_Y)$. Let $\omega$ be an $(X,Y)$-ary operation corresponding to $f:FY\to FX$ and $\omega_0$ an $(X_0,Y_0)$-ary operation corresponding to $f_0$. Consider a $\ct$-algebra $A$. We have
$$
(UA)^{\delta_Y}\omega_A=(\omega_0)_A(UA)^{\delta_X}
$$
Since $I$ is a generator, $\delta_Y$ is an epimorphism and thus $(UA)^{\delta_Y}$ is a monomorphism. Hence $(\omega_0)_A=(\omega_0)_B$ implies that $\omega_A=\omega_B$ for any $\ct$-algebras $A$ and $B$. Thus $R$ is an embedding.

We will show that $R$ is surjective on objects. Let $A$ be a $\ct_d$-algebra. Again, let $\omega$ be an $(X,Y)$-ary operation corresponding to $f:FY\to FX$ and $\omega_0$ an $(X_0,Y_0)$-ary operation corresponding to $f_0$. Since $\cv_0(I,-)$ preserves $\lambda$-presentable objects, $X_0$ and $Y_0$ are $\lambda$-presentable. Since $A$ is an $\ct_d$-algebra, we get 
$$
(\omega_0)_A=\tilde{A}(f_0):(UA)^{X_0}\to (UA)^{Y_0}.
$$ 
We will show that there is a unique $\tilde{f}:(UA)^{X_0}\to (UA)^Y$ such that $(UA)^{\delta_Y}\tilde{f}=(\omega_0)_A$. Consider $\varphi:\tilde{K}Y\to [(UA)^{X_0},(UA)^K]$ sending 
$a:Z\cdot I\to Y$ to
$$
(UA)^{X_0} \xrightarrow{\ (\omega_0)_A } (UA)^{Y_0} \xrightarrow{\ (UA)^{a_0}} (UA)^{Z\cdot I}
$$
where $a_0:Z\cdot I\to Y_0$ is determined by $a$, i.e., $\delta_Y a_0=a$.
Since $(UA)^Y$ is a weighted limit $\{\tilde{K}Y,(UA)^K\}$, we get $\tilde{f}:(UA)^{X_0}\to (UA)^Y$ induced by $\varphi$. Then
$(UA)^{\delta_Y}\tilde{f}=\{\tilde{K}\delta_Y,(UA)^K\}\tilde{f}$ is induced by $\varphi\tilde{K}\delta_Y$. Put $\omega_A=\tilde{f}(UA)^{\delta_X}$. We have
$$
(UA)^{\delta_Y}\omega_A=(UA)^{\delta_Y}\tilde{f}(UA)^{\delta_X}=(\omega_0)_A(UA)^{\delta_X}.
$$

We have to how that this makes $A$ an $\ct$-algebra. Let $\rho$ be a $(Y,Z)$-ary operation corresponding to $g:FY\to FY$. We have to show that
$(\rho\omega)_A=\rho_A\omega_A$. Put $h=gf$. Then
$$
\rho_A\omega_A=\tilde{g}(UA)^{\delta_Y}\tilde{f}(UA)^{\delta_X}=
\tilde{g}(\omega_0)_A(UA)^{\delta_X}.
$$
Since
$$
(UA)^{\delta_Z}\tilde{g}(\omega_0)_A=(\rho_0)_A(\omega_0)_A=((\rho\omega)_0)_A=(UA)^{\delta_Z}\tilde{h},
$$
the unicity implies that $\tilde{h}=\tilde{g}(\omega_0)_A$. Hence
$$
(\rho\omega)_A=\tilde{h}(UA)^{\delta_X}=\tilde{g}(\omega_0)_A(UA)^{\delta_X}=\tilde{g}(UA)^{\delta_Y}\omega_A=\rho_A\omega_A.
$$.

This proves that the reduct functor $R$ is surjective on objects. It 
remains to show that $R$ is full. Consider $\ct$-algebras $A$ and $B$ 
and a homomorphism $h:RA\to RB$. Then
$$
(UB)^{\delta_Y}h^Y\omega_A=h^{Y_0}(UA)^{\delta_Y}\omega_A=h^{Y_0}(\omega_0)_A(UA)^{\delta_X}=(\omega_0)_Bh^{X_0}(UA)^{\delta_X}
$$
and
$$ 
(UB)^{\delta_Y}\omega_Bh^X=(\omega_0)_B(UB)^{\delta_X}h^X=(\omega_0)_Bh^{X_0}(UA)^{\delta_X}
$$
Since $(UB)^{\delta_Y}$ is a monomorphism, we get
$$
h^Y\omega_A=\omega_Bh^X.
$$
Thus $h:A\to B$ is a homomorphism. We have proved that $R$ is full.

Since $\cd_\mu$ consists of $\mu$-generated objects, $T$ preserves $\mu$-directed colimits of mono\-morphisms (more precisely, it sends $\mu$-directed colimits of monomorphisms to $\mu$-directed colimits, see \cite[5.29]{K}). Let $\ct'_d$ is a subtheory of $\ct_d$ consisting of $(X,Y)$-ary operations where $X$ is discrete $\lambda$-presentable and $Y$ discrete $\mu$-presentable. \cite{R} 3.17 implies that the reduct 
$\cv$-functor 
$$
\Alg(\ct_d)\to\Alg(\ct'_d)
$$ 
is an equivalence. 

Let $\ct_{d\mu}$ be the restriction of $\ct_d$ on $\cd_\mu$. Every discrete $\lambda$-presentable object $X$ is a $\mu$-directed colimit $x_i:X_i\to X$ of discrete $\mu$-presentable objects $X_i$ and split monomorphisms. Consider an $(X,Y)$-ary operation of $\ct'_d$ corresponding to a morphism $f:FY\to FX$, hence to its adjoint transpose 
 $f^\ast:Y\to TX$. Since $Tx_i:TX_i\to TX$ is a $\mu$-directed colimit of $TX_i$ and split monomorphisms, $f^\ast$ factorizes through some
$Tx_i$ as $f^\ast=(Tx_i)g$. Then $g:Y\to TX_i$ yields an $(X_i,Y)$-ary
operation $g^\ast$. Consequently, the reduct $\cv$-functor
$$
\ct'_d\to\ct_{d\mu}
$$
is an equivalence. Hence the reduct functor $\Alg(\ct)\to\Alg(\ct_{d\mu})$ is a $\cv$-equivalence. Thus $T$ is given by a $\mu$-ary discrete Lawvere theory $\ct_{d\mu}$.
\end{proof}

\begin{rem}
{
\em
(1) $\Pos$ satisfies the assumptions of \ref{discrete} for $\lambda=\mu=\aleph_0$ while $\Met$ for $\lambda=\aleph_1$ and $\mu=\aleph_0$.
 
(2) In the terminology of \cite{BG}, we have shown that every $\cd_\mu$-induced theory is $\cd_\mu$-nervous (under the assumptions 
of \ref{discrete}). \cite[Theorem 43]{BG} shows that, if $\cd_\mu$
is saturated in the sense that strongly $\mu$-ary functors $H:\cv\to\cv$
are closed under composition, then $\cd_\mu$-induced and $\cd_\mu$-nervous
monads coincide. This means that strongly $\mu$-ary monads and monads given by $\mu$-ary discrete Lawvere theories coincide. Following \cite{KL}, this is true in every cartesian closed $\cv$. This covers both $\Pos$ and the category $\omega$-$\CPO$ of posets with joins of non-empty $\omega$-chains (strongly finitary monads in these categories were characterized in \cite{ADV} and \cite{ADV2}). As we have mentioned in the introduction, this is also true when finite products are absolute which include abelian categories $\cv$.
 
}
\end{rem}

\section{Discrete Lawvere theories} 

Following \cite{BG}, a \textit{signature} is a functor $\ob\cd_\mu\to\cv$ where
$\ob\cd_\mu$ is the discrete $\cv$-category on the set of objects of $\cd_\mu$. Thus a signature $\Sigma$ specifies for each discrete $\mu$-presentable object $X\cdot I$ an object $\Sigma X$ of operations of input arity $X$, i.e., of $(X,I)$-ary operations. A signature $\Sigma$ will be called \textit{discrete} if all objects $\Sigma X$ are discrete.

\begin{propo}\label{free}
A free $\cv$-monad over a discrete signature is strongly $\mu$-ary.
\end{propo}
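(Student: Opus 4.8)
The plan is to exhibit the free monad $T_\Sigma$ as a transfinite colimit of its free-algebra chain and to prove, by transfinite induction, that every stage of that chain is strongly $\mu$-ary. This suffices because the strongly $\mu$-ary $\cv$-endofunctors form a coreflective subcategory of $[\cv,\cv]$---the coreflector being $H\mapsto\Lan_K(HK)$ with its canonical counit $\Lan_K(HK)\to H$---and coreflective subcategories are closed under colimits; hence the colimit $T_\Sigma$ of the chain is strongly $\mu$-ary as soon as all its stages are.

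First I would write down the signature endofunctor. Since $\Sigma$ is discrete we may write $\Sigma X=S_X\cdot I$ for sets $S_X$, and the endofunctor whose algebras are the $\Sigma$-algebras is
$$
H_\Sigma A=\coprod_{X\in\ob\cd_\mu}\Sigma X\otimes[X,A]=\coprod_{X\in\ob\cd_\mu}\coprod_{S_X}[X,A],
$$
a coproduct of the power functors $[X,-]\cong(-)^Z$ indexed by the discrete $\mu$-presentable arities $X=Z\cdot I$. Each such power functor is strongly $\mu$-ary (with $X=I$ giving $\Id$, since $I\in\cd_\mu$), and coproducts of strongly $\mu$-ary functors are strongly $\mu$-ary, so $H_\Sigma$ is strongly $\mu$-ary. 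In particular $H_\Sigma$ preserves $\lambda$-filtered colimits, so the free monad exists and $T_\Sigma=\colim_\alpha V_\alpha$ for the usual chain $V_0=\Id$, $V_{\alpha+1}=\Id+H_\Sigma V_\alpha$, $V_\gamma=\colim_{\alpha<\gamma}V_\alpha$.

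The base stage ($\Id$) and the limit stages (colimits) are strongly $\mu$-ary automatically, so the induction turns on the successor stage, i.e.\ on showing that $H_\Sigma G$ is strongly $\mu$-ary whenever $G$ is. Using the coproduct form of $H_\Sigma$ and closure under coproducts, this reduces to the key lemma: for $X=Z\cdot I\in\cd_\mu$ and $G$ strongly $\mu$-ary, the composite $[X,G-]\cong(G-)^Z$ is again strongly $\mu$-ary. Strong $\mu$-ariness of $G$ means that $G$ preserves the weighted colimits $\tilde K A\ast K$, i.e.\ $G(\tilde K A\ast K)=\tilde K A\ast GK$ (see \cite[5.29]{K}); these are built from $\mu$-directed colimits of monomorphisms, which the $\mu$-small product $(-)^Z$ also preserves because in a locally $\mu$-generated base $\mu$-small products commute with $\mu$-directed colimits of monomorphisms. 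Hence $[X,G(\tilde K A\ast K)]=[X,\tilde K A\ast GK]=\tilde K A\ast[X,G(K-)]$, which is exactly the statement that $[X,G-]=\Lan_K([X,G(K-)])$ is strongly $\mu$-ary. This closes the induction, and therefore $T_\Sigma$ is strongly $\mu$-ary.

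I expect this key lemma to be the main obstacle, precisely because strongly $\mu$-ary functors are not closed under composition in general (the saturation phenomenon discussed in the remark below). The whole point is to use that for a discrete signature $H_\Sigma$ is assembled only from the discrete powers $[X,-]$ with $X$ of $\mu$-small arity, and that such an outer power commutes past the $\mu$-directed colimits of monomorphisms witnessing the strong $\mu$-ariness of $G$. Both hypotheses enter here: discreteness removes a general tensor factor $\Sigma X\otimes-$ that need not interact with these colimits, and $\mu$-smallness of the arities makes the product commute with $\mu$-filtered colimits in the locally $\mu$-generated $\cv$; notably, neither distributivity of $\cv$ nor full composition-closure of strongly $\mu$-ary functors is required.
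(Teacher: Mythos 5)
Your overall architecture is reasonable: the strongly $\mu$-ary endofunctors do form a coreflective, hence colimit-closed, subcategory of $[\cv,\cv]$; the identification of $H_\Sigma$ as a coproduct of powers $[X,-]$ with $X\in\cd_\mu$ is exactly right; and you correctly locate the entire difficulty in the successor step of the chain. But the proof of your key lemma has a genuine gap. The colimits $\tilde K A\ast K$ and $\tilde K A\ast GK$ are weighted colimits with weight $\cv(K-,A)$; they are \emph{not} $\mu$-directed colimits of monomorphisms, and local $\mu$-generatedness says nothing about them. The density presentation of a general object of $\cv$ by discrete $\mu$-presentable objects necessarily involves quotients --- this is precisely why the paper works with the surjections $\delta_V\colon V_0\to V$ and a $(\Surj,\Inj)$-factorization system. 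Concretely, in $\Met$ the two-point space at distance $1$ receives a mono from the discrete two-point space, but the $\mu$-directed colimit of its discrete subobjects is the discrete space on its underlying set, not the space itself; so the commutation of $(-)^Z$ with $\mu$-directed colimits of monos never gets to act on the colimit you need. Worse, your key lemma is not weaker than composition-closure: any strongly $\mu$-ary $H$ is the weighted colimit $\int^{X\in\cd_\mu}\cv(KX,-)\otimes HKX$ of the powers $[X,-]$, so knowing that each $[X,G-]$ is strongly $\mu$-ary, together with closure under colimits and tensors, already gives that $HG$ is strongly $\mu$-ary for all strongly $\mu$-ary $H$ and $G$. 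Your lemma is therefore equivalent to the saturation of $\cd_\mu$, which the introduction and Remark 3.2(2) treat as nontrivial and, for $\Met$, tied to an open problem; it cannot follow from an argument this soft.

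The paper's proof sidesteps composition entirely. Since $\Sigma X$ is discrete, $(-^X)\cdot\Sigma X$ is a coproduct of powers, and, following the construction in \cite[Proposition 55]{BG}, the free monad $T_\Sigma$ is \emph{itself} a coproduct of powers $(-)^X$ with $X\in\cd_\mu$ (powers by discrete $\mu$-presentable objects compose to powers by $X\otimes Y\in\cd_\mu$, and powers of the relevant coproducts redistribute, so the free-monad construction never leaves this class). Only two ingredients are then needed: closure of strongly $\mu$-ary functors under coproducts (\cite[Lemma 57]{BG}) and the Yoneda computation $\Lan_K\bigl(\cv(KX,K-)\bigr)\cong\cv(KX,-)$ showing that each single power is strongly $\mu$-ary. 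If you want to keep your transfinite-induction scheme, the fix is to strengthen the inductive hypothesis from ``$V_\alpha$ is strongly $\mu$-ary'' to ``$V_\alpha$ is a coproduct of powers by objects of $\cd_\mu$''; that is the property which actually propagates through the successor step, and it is where the discreteness of $\Sigma$ is really used.
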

\begin{proof}
Following the proof of \cite[Proposition 55]{BG}, a free $\cv$-monad $T_\Sigma$ over $\Sigma$ is is a free monad over the functor
$$
\coprod\limits_X (-^X)\cdot\Sigma X
$$
where $|X|<\mu$. Since $\Sigma X$ is discrete, $T_\Sigma$ is a coproduct of powers $-^X$, $|X|<\mu$. Since strongly $\mu$-ary functors
are closed under coproducts (\cite[Lemma 57]{BG}), it suffices to show that powers $-^X$ , $|X|<\mu$ are strongly $\mu$-ary. But this follows from the enriched Yoneda lemma because, for a $\cv$-functor $S:\cv\to\cv$, we
have
$$
[(\cd_\mu)^{\op},\cv](\tilde{K}X,SK)\cong SX\cong [\cv,\cv](\cv(KX,-),S).
$$
\end{proof}

\begin{theo}
Monads induced by $\mu$-ary discrete Lawvere theories preserve surjections.
\end{theo}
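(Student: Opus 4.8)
The plan is to exhibit the monad $T$ induced by a $\mu$-ary discrete Lawvere theory $\ct$ as a pointwise surjective quotient of a free $\cv$-monad over a discrete signature, and then to transport preservation of surjections across that quotient. Recall that the object of $(X,I)$-ary operations of $\ct$ is the free algebra $TX$, so the operations of $\ct$ assemble into the (generally non-discrete) signature $X\mapsto TX$. I would instead take the \emph{discrete} signature $\Sigma$ with $\Sigma X=(TX)_0=\cv_0(I,TX)\cdot I$, together with the counit surjections $\delta_{TX}\colon(TX)_0\to TX$, which records every operation of $\ct$ as a generator. By Proposition \ref{free} the free $\cv$-monad $T_\Sigma$ over $\Sigma$ is strongly $\mu$-ary, and hence preserves surjections by the first part of the proof of Theorem \ref{discrete} (the weighted-colimit argument, using that $(\Surj,\Inj)$ is a $\cv$-factorization system and that surjections are closed under products). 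Substitution of operations yields a natural transformation from $\coprod_X(-^X)\cdot\Sigma X$ to $T$, whence the universal property of the free monad produces a monad morphism $q\colon T_\Sigma\to T$.

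Granting that $q$ is pointwise surjective, the theorem follows at once. For a surjection $f\colon A\to B$, naturality of $q$ gives
$$
Tf\circ q_A=q_B\circ T_\Sigma f .
$$
Here $T_\Sigma f$ is a surjection because $T_\Sigma$ preserves them, and $q_B$ is a surjection by assumption, so the right-hand side is a surjection; since $\cv_0(I,-)$ turns this equation into a factorization of a surjective map of sets, the left factor $Tf$ is a surjection as well. Equivalently, and more directly, once one knows the \emph{key fact} that every point $I\to TW$ factors as $I\to TD\xrightarrow{Te}TW$ for some $e\colon D\to W$ with $D\in\cd_\mu$, the conclusion is immediate: given a point of $TB$ so presented, the map $e\colon D\to B$ is, because $D$ is discrete, just a family of points of $B$, which lifts along the surjection $f$ (as $\cv_0(I,f)$ is surjective) to $e'\colon D\to A$ with $fe'=e$; then $Tf\circ Te'=Te$ exhibits our point of $TB$ in the image of $\cv_0(I,Tf)$.

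The main obstacle is exactly this pointwise surjectivity of $q$ (equivalently, the key fact above), and this is where the argument must do real work, since we may \emph{not} assume $T$ itself to be strongly $\mu$-ary --- whether it is remains the open problem noted in the introduction. I would first check surjectivity of $q$ on discrete objects: on $X\in\cd_\mu$ the generator inclusion $\Sigma X\hookrightarrow T_\Sigma X$ composes with $q_X$ to give precisely $\delta_{TX}$, which is surjective, and this extends to every discrete object because both $T_\Sigma$ and $T$ send the $\mu$-directed colimits of split monomorphisms presenting an arbitrary discrete object as $Z\cdot I=\colim_{|Z_i|<\mu}Z_i\cdot I$ to $\mu$-directed colimits, along which surjections are stable. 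To reach an arbitrary $W$ one expresses $W$ as a $\lambda$-filtered colimit of $\lambda$-presentable objects and uses that both monads are $\lambda$-ary; the delicate point, and the crux of the whole proof, is to show that every point of $TW$ is supported on a discrete $\mu$-presentable arity, i.e.\ that it factors through some $Te\colon TD\to TW$ with $D\in\cd_\mu$. For this I would use the colimit presentation of the $\cd_\mu$-theory monad $T$ --- so that an element of $TW$ is built from $\mu$-ary operations applied to a family of $<\mu$ points of $W$ --- together with the genuine Kan-extension colimit $T_\Sigma W=\tilde{K}W\ast T_\Sigma K$ supplied by the strong $\mu$-arity of $T_\Sigma$, which already enjoys this support property and maps onto $T$ via $q$.
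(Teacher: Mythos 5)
Your overall strategy coincides with the paper's: encode the operations of $\ct$ in the discrete signature $\Sigma X=(TX)_0$, observe that the free monad $T_\Sigma$ on $\Sigma$ is strongly $\mu$-ary by Proposition~\ref{free} and hence preserves surjections by the argument from the proof of Theorem~\ref{discrete}, and then transport this along a pointwise surjective monad morphism $q\colon T_\Sigma\to T$. Your reduction in the second paragraph is also correct (indeed only surjectivity of $q_B$ and of $T_\Sigma f$ is needed there). But the step you yourself flag as the crux --- pointwise surjectivity of $q$ --- is not actually proved, and the sketch you offer does not close the gap. Surjectivity of $q_X$ for $X\in\cd_\mu$ is fine (the composite $\Sigma X\to T_\Sigma X\to TX$ is $\delta_{TX}$), but the extension to arbitrary objects leans on colimit-preservation properties of $T$ (preservation of $\mu$-directed colimits of split monomorphisms, support of every point of $TW$ on a $\mu$-small discrete arity) that are exactly what one does \emph{not} know here: $T$ is defined abstractly as the monad of the reflective subcategory $\Alg(\ct)$ of $\cv$, and whether such a $T$ is strongly $\mu$-ary is the open problem recalled in the introduction. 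Your proposed justification --- that ``an element of $TW$ is built from $\mu$-ary operations applied to a family of $<\mu$ points of $W$'' --- is a restatement of the claim to be proved rather than an argument for it; in the enriched setting the free $\ct$-algebra is not given by a syntactic term construction.

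The paper closes precisely this gap with external machinery: $\Alg(\ct)$ is identified as a Birkhoff subcategory of the category $\Str(\mathbb F)$ of structures for the functional language $\mathbb F$ associated with the signature, so by \cite[5.2]{R3} the reflections $\rho_A\colon T_{\mathbb F}A\to TA$ are strong epimorphisms, and by \cite[3.5]{R3} strong epimorphisms are surjections --- this is where the hypothesis that $(\Surj,\Inj)$ is a $\cv$-factorization system does its work. To complete your proof without citing those results you would have to reprove some version of this, e.g.\ realize the reflection of a free $\Sigma$-structure into $\Alg(\ct)$ as a quotient in the $(\Surj,\Inj)$ system, or otherwise establish that every point of $TW$ lifts along $q_W$. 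As it stands, the proposal correctly isolates the difficulty but leaves it unresolved.
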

\begin{proof}
Let $\ct$ be a $\mu$-ary discrete Lawvere theory. Let $\mathbb F$ be the induced enriched functional language with discrete arities and $\Str(\mathbb F)$ its $\cv$-category of $\mathbb F$-structures in the sense of \cite[3.1 and 3.3]{RTe}. Then $\Alg(\ct)$ is a Birkhoff subcategory of $\Str(\mathbb F)$, hence a reflective subcategory
of $\Str(\mathbb F)$ whose reflections $\rho_A$ are strong epimorphism
(see \cite[5.2]{R3}). Following \cite[3.5]{R3}, $\rho_A$ are surjections.
Since the monad $T_\mathbb F$ induced by $\Str(\mathbb F)$  is strongly $\mu$-ary (\ref{free}), it preserves surjections (following the proof of \ref{discrete}). Since $\rho:T_\mathbb F\to T$ is pointwise surjective, $T$ preserves surjection as well.
\end{proof}

\begin{coro}
$\mu$-ary discrete Lawvere theories are discrete equational theories in the sense of \cite{R3}.
\end{coro}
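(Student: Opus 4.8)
The plan is to deduce the corollary directly from the preceding theorem by matching it against the notion of discrete equational theory recorded in \cite{R3}. The key observation is that, in the terminology of \cite{R3}, a theory is a discrete equational theory exactly when the $\cv$-monad it induces preserves surjections; this is precisely the open question posed in \cite[4.7]{R3}, and it is the property just established in the theorem. Thus the corollary is essentially a reformulation, once the two descriptions are aligned.

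First I would take an arbitrary $\mu$-ary discrete Lawvere theory $\ct$ and let $T$ be the $\cv$-monad it induces through its category $\Alg(\ct)$ of algebras, as set up in the Preliminaries. By the preceding theorem, $T$ preserves surjections. I would then invoke the characterization from \cite{R3} identifying the monads of discrete equational theories as exactly those preserving surjections, and apply it to $T$ to conclude that $\ct$ is a discrete equational theory.

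The one subtlety to check is that the class $\Surj$ of surjections appearing in the theorem is the same as the one used to define discrete equational theories in \cite{R3}: both consist of the morphisms $f$ with $\cv_0(I,f)$ surjective, and under the standing hypothesis that $(\Surj,\Inj)$ is a $\cv$-factorization system these agree. With this identification the implication is immediate, and I expect the only genuine work to be bibliographic, namely citing the equivalence from \cite{R3} in its exact form, rather than any new computation.
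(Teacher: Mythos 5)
Your proposal is correct and follows exactly the route the paper intends: the corollary is stated without proof precisely because, as the introduction makes explicit, being a discrete equational theory in the sense of \cite{R3} amounts to the induced monad preserving surjections, which is the content of the preceding theorem. Your added check that the class $\Surj$ agrees with the one in \cite{R3} is a reasonable bit of diligence but does not change the argument.
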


\end{document}